\newcolumntype{C}[1]{>{\centering\arraybackslash}m{#1}} 
\declaretheorem[numberwithin=section]{theorem}
\declaretheorem[sibling=theorem]{proposition}
\declaretheorem[sibling=theorem, style=remark]{remark}
\newtheorem*{theorem*}{Theorem}
\newtheorem*{question*}{Question}
\newtheorem*{conjecture*}{Conjecture}
\newcommand{\Z}{\mathbb Z}
\newcommand{\R}{\mathbb R}
\newcommand{\C}{\mathbb C}
\renewcommand{\P}{\mathbb P}
\newcommand{\M}{\mathcal M}
\renewcommand{\H}{\mathcal H}
\newcommand{\Aut}{\operatorname{Aut}}
\def\lowcomma{_{\textstyle,}}
\title{Eigenforms of hyperelliptic curves with many automorphisms}
\author{Eduard Duryev, Leonid Monin}
\date{}
\newcommand{\Addresses}{{
  \bigskip
  \footnotesize

  E.~Duryev, \textsc{Universit\'e Paris 7 Diderot, Paris, France}\par\nopagebreak
  \textit{E-mail address}: \texttt{eduryev@gmail.com}

  \medskip

  L.~Monin, \textsc{Max Planck Institute for Mathematics in the Sciences, Leipzig, Germany}\par\nopagebreak
  \textit{E-mail address}: \texttt{leonid.monin@mis.mpg.de}
}}
\newcommand{\subjclass}[2][1991]{%
  \let\@oldtitle\@title%
  \gdef\@title{\@oldtitle\footnotetext{#1 \emph{Mathematics subject classification.} #2}}%
}
\newcommand{\keywords}[1]{%
  \let\@@oldtitle\@title%
  \gdef\@title{\@@oldtitle\footnotetext{\emph{Key words and phrases.} #1.}}%
}
\subjclass[2020]{Primary 14H37,  32G15}
\keywords{Translation surfaces, hyperelliptic curves, many automorphisms, eigenforms}
\begin{document}

\maketitle

\begin{abstract}
Given a pair of translation surfaces it is very difficult to determine whether they are supported on the same algebraic curve. In fact, there are very few examples of such pairs. In this note we present 
infinitely many examples of finite collections of translation surfaces supported on the same algebraic curve. 

The underlying curves are hyperelliptic curves with many automorphisms. For each curve, the automorphism of maximal order acts on the space of holomorphic 1-forms. We present a translation surface corresponding to each of the eigenforms of this action.
\end{abstract}

\tableofcontents

\section{Introduction}
Let $X \in \M_g$ be an algebraic curve and let $f \in \Aut(X)$ be its automorphism. Let $\Omega(X)$ be a $g$-dimensional space of holomorphic 1-forms on $X$. An automorphism $f \in \Aut(X)$ induces an action $f^*: \Omega(X) \to \Omega(X)$ via pullback. A non-zero holomorphic 1-form $\omega \in \Omega(X)$ is called an {\em eigenform of $f$} if it is an eigenvector of $f^*$.

Following the notation of \cite{R} and \cite{W} we say that an algebraic curve $X$ has {\em many automorphisms} if it does not belong to a non-trivial family of curves with the same group of automorphisms. All hyperelliptic curves with many automorphisms are listed in \cite[Table~1]{MP}. The list consists of three infinite families spread across all genera and sporadic examples. In this note we focus on the three infinite families.

A pair $(X, \omega)$, where $X \in \M_g$ is an algebraic curve, or a Riemann surface, of genus $g$ and $\omega$ is a non-zero holomorphic 1-form on $X$, is called an {\em Abelian differential}. The set of zeroes of $\omega$ is denoted by $Z(\omega)$. For any Abelian differential $(X,\omega)$ the integration of $\omega$ produces a {\em translation structure}, an atlas of complex charts on $X \setminus Z(\omega)$ with parallel translations $z\mapsto z+c$ as transition functions. A neighborhood of a conical point possesses a singular flat structure obtained by pulling back flat metric on a disk via the covering map $z \mapsto z^k$. Conversely, every translation structure defines an Abelian differential as pullback of complex structure from $\C$ and the holomorphic 1-form $dz$ on it. Abelian differentials are also called {\em translation surfaces}. In fact, every Abelian differential can be represented as a collection of polygons with pairs of parallel sides identified by translation (see Figure~\ref{fig:translation}). For more background on translation surfaces see \cite{Z}.

\begin{figure}[H]
\centering
\includegraphics[width=0.4\linewidth]{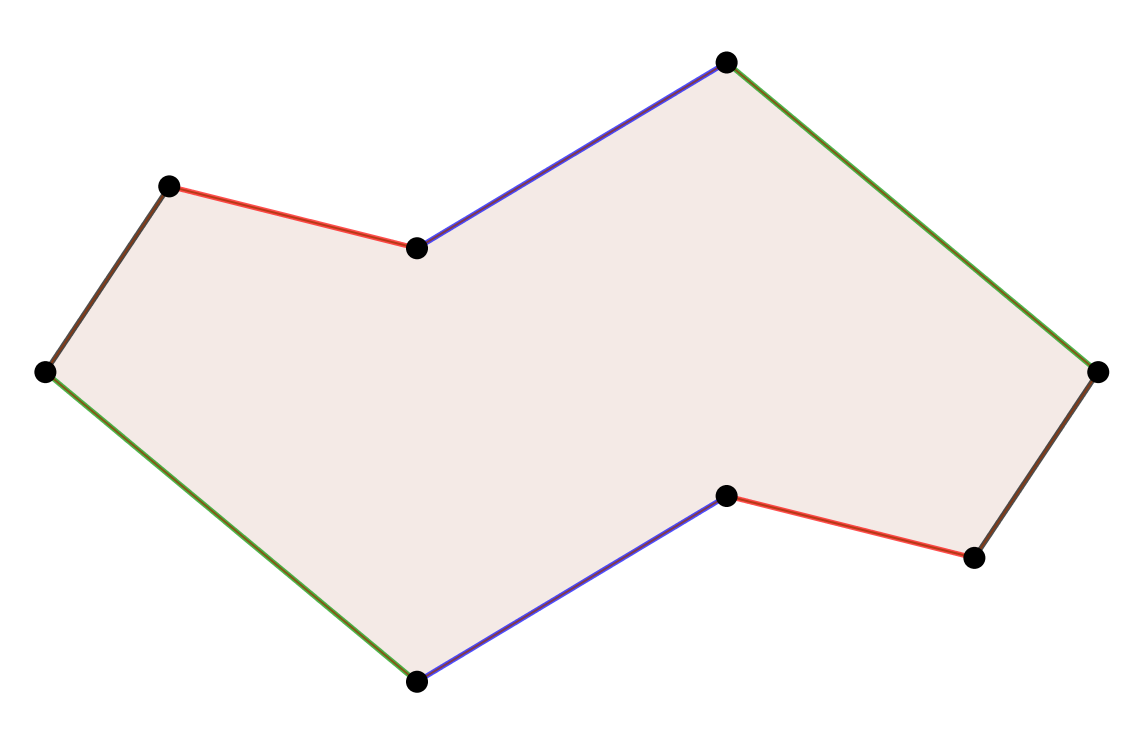}
\caption{An example of a translation surface: the edges of the polygon of the same color are identified by parallel translations.}
\label{fig:translation}
\end{figure}

This gives a convenient way of visualizing holomorphic 1-forms on algebraic curves. However it is almost never possible to determine the underlying algebraic curve from the polygonal picture. In particular, there are very few examples of pairs of polygonal representations of holomorphic 1-forms $\omega$ and $\omega'$ supported on the same algebraic curve $X$. We use eigenforms to provide such examples. In this note we construct translation surfaces representing eigenbases on hyperelliptic curves with many automorphisms:

\begin{theorem}\label{thm:main}
Let $S^k_N$ and $H^k_N$ be polygonal representations of translation surfaces defined in Section~\ref{sec:eigen}. Then for any $k = 1, \ldots, g$:
\begin{itemize}
    \item[1.] $S^k_{2g+2}$ represents an eigenform $\omega_k = x^{k-1} dx/y$ on the hyperelliptic curve 
    ${\mathcal A}_g: y^2=x^{2g+2}-1$;
    \item[2.] $S^k_{2g+1}$ represents an eigenform $\omega_k = x^{k-1} dx/y$ on the hyperelliptic curve
    ${\mathcal P}_g:y^2=x^{2g+1}-1$;
    \item[3.] $H^{k}_{2g}$ represents an eigenform $\omega_k = x^{k-1} dx/y$ on the hyperelliptic curve
    ${\mathcal D}_g:y^2=x(x^{2g}-1)$.
\end{itemize}
\end{theorem}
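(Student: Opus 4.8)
The plan is to verify, for each of the three families, two separate things: first, that the forms $\omega_k = x^{k-1}\,dx/y$ for $k = 1, \ldots, g$ are indeed the eigenforms of the maximal-order automorphism; and second, that the specific polygon $S^k_N$ (or $H^k_N$) built in Section~\ref{sec:eigen} really is the polygonal representation of the translation surface $(X, \omega_k)$. These are essentially independent tasks, and I would treat them in that order.

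For the eigenform statement, I would start from the standard fact that on a hyperelliptic curve $y^2 = P(x)$ of genus $g$, the forms $x^{k-1}\,dx/y$, $k = 1, \ldots, g$, form a basis of $\Omega(X)$. In each case the maximal automorphism is the obvious one: for $\mathcal A_g : y^2 = x^{2g+2}-1$ it is $\rho(x,y) = (\zeta x, y)$ with $\zeta$ a primitive $(2g+2)$-th root of unity; for $\mathcal P_g : y^2 = x^{2g+1}-1$ it is $\rho(x,y) = (\zeta x, -y)$ with $\zeta$ a primitive $(2g+1)$-th root of unity (one checks the order is $2(2g+1)$, or takes the relevant generator from \cite{MP}); for $\mathcal D_g : y^2 = x(x^{2g}-1)$ it is $\rho(x,y) = (\zeta x, \zeta^{1/2} y)$ with $\zeta$ a primitive $(2g)$-th root. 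One then computes directly that $\rho^* \omega_k$ is a scalar multiple of $\omega_k$: e.g. $\rho^*(x^{k-1}\,dx/y) = \zeta^k \cdot x^{k-1}\,dx/y$ in the first case, and similarly (with extra sign/root factors from the action on $y$) in the other two. Since distinct $k$ give distinct eigenvalues, $\{\omega_k\}$ is precisely the eigenbasis. This step is routine once the automorphism is pinned down; the only mild subtlety is confirming that the listed automorphism in \cite{MP} is the one of maximal order and identifying its precise action on $y$.

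The substantive part is matching $\omega_k$ with the polygon $S^k_N$. Here the natural strategy is to read off from the polygon the data that characterizes a translation surface up to isomorphism — the cone angles at the singularities (i.e. the stratum, equivalently the divisor $Z(\omega_k)$), the hyperelliptic involution realized as $z \mapsto -z$, and the cyclic symmetry of order $N$ — and to check these agree with what $(X, \omega_k)$ must have. Concretely: $Z(x^{k-1}\,dx/y)$ is supported on the Weierstrass points and the point(s) over $x = \infty$, with multiplicities depending on $k$ and on $N \bmod$ small integers; the polygon $S^k_N$ should be visibly an $N$-fold symmetric arrangement (a "staircase" or regular-polygon-type figure) whose side identifications produce exactly those cone points, and whose order-$N$ rotational symmetry pushes forward to an automorphism acting on the period coordinates by the scalar $\zeta^k$. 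Since a translation surface together with an order-$N$ automorphism acting with a prescribed eigenvalue on $H^1$ and with a prescribed underlying branched-cover structure over the $N$-gon (or over $\mathbb{P}^1$ via the hyperelliptic and cyclic quotients) is determined, this identifies the curve as $\mathcal A_g$ (resp. $\mathcal P_g$, $\mathcal D_g$) and the form as $\omega_k$. Alternatively, and perhaps more cleanly, one can exhibit an explicit affine map: triangulate or slit the polygon, integrate $\omega_k = x^{k-1}\,dx/y$ along the resulting arcs using the cyclic branched-cover presentation $x \mapsto x^N$ of $\mathbb{P}^1$, and check the resulting period vectors literally coincide with the edge vectors of $S^k_N$.

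The main obstacle will be this second matching step — specifically, organizing the period computation so that it transparently reproduces the polygon rather than merely a surface isometric to it. The periods of $x^{k-1}\,dx/y$ over the relevant cycles are beta-integral type expressions, and one must show their real and imaginary parts assemble into the concrete edge vectors of $S^k_N$; doing this uniformly in $N$ and $k$, across three families with slightly different ramification behavior over $0$ and $\infty$, is where the bookkeeping is delicate. I expect the cleanest route is the symmetry/uniqueness argument: pin down the stratum, the hyperelliptic involution, and the order-$N$ automorphism with its eigenvalue, and invoke the classification of cyclic covers of $\mathbb{P}^1$ to conclude there is only one such Abelian differential, which therefore must be both $(\mathcal A_g, \omega_k)$ and $S^k_N$.
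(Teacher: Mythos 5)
Your proposal is correct and follows essentially the same route as the paper: the eigenform computation is the paper's Section~2, and the identification of the polygon proceeds, as in the paper's proof, by reading off the cone angles (hence the stratum and genus), exhibiting the hyperelliptic involution as the half-turn of each rhombus and the order-$N$ automorphism as the rotation about $O$, and then invoking the uniqueness of the genus-$g$ hyperelliptic curve with an automorphism of that order together with the eigenvalue $\xi_N^k$ of the visible rotation to pin down the curve and the index $k$. The period-integral alternative you mention is not needed, exactly as you suspected.
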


The similar construction for the polar hypereliptic curves of odd genus appeared in \cite{kucharczyk}.

\noindent
{\bf Acknowledgements.} 
This research has emerged from the question of Curt McMullen on representations of eigenforms as translation surfaces.

During this work, the first author was supported by FSMP postdoc grant, and the second author was supported by EPSRC Early Career Fellowship EP/R023379/1.

\section{Hyperelliptic curves with many automorphisms}
\label{sec:curves}

In this section we describe three families of hyperelliptic curves with many automorphisms and their eigenforms. The content of this section is summarized in Table~\ref{table1}. Consider the following curves in the affine plane:

\begin{align*}
   {\mathcal A}_g: & \ y^2 = x^{2g+2} - 1 \\
   {\mathcal P}_g: & \ y^2 = x^{2g+1} - 1 \\
   {\mathcal D}_g: & \ y^2 = x(x^{2g} - 1)\\
\end{align*}

\begin{figure}[H]
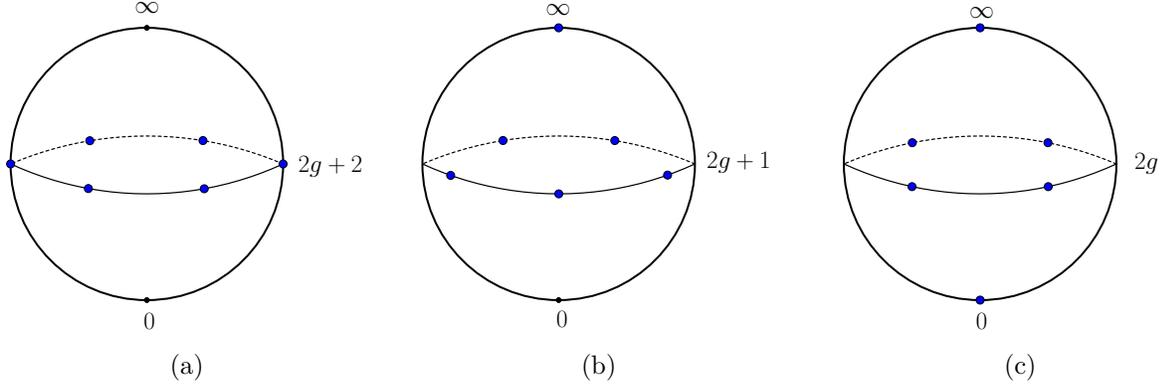

\begin{subfigure}{.33\textwidth}
  \centering
  \includestandalone[width=\linewidth]{apolar}
  \caption{}
  \label{fig:apolar}
\end{subfigure}%
\begin{subfigure}{.33\textwidth}
  \centering
  \includestandalone[width=\linewidth]{polar}
  \caption{}
  \label{fig:polar}
\end{subfigure}
\begin{subfigure}{.33\textwidth}
  \centering
  \includestandalone[width=\linewidth]{dipolar}
  \caption{}
  \label{fig:dipolar}
\end{subfigure}
\caption{The branching loci of hyperelliptic curves (a) ${\mathcal A}_g$, (b) ${\mathcal P}_g$ and (c) ${\mathcal D}_g$, via hyperelliptic covers to $\P^1$.}
\label{fig:branchloci}
\end{figure}

The homogenization of these equations define algebraic curves in $\P^2$. Curve ${\mathcal D}_g$ is smooth, however ${\mathcal A}_g$ and ${\mathcal P}_g$ have singularities at $[0:0:1]$, and one needs to consider normalizations of these curves.
\begin{remark}
Apart from these three families there are only finitely many hyperelliptic curves with many automorphisms (see \cite{MP}).
\end{remark}

The hyperelliptic involutions of ${\mathcal A}_g, {\mathcal P}_g, {\mathcal D}_g$ are given by $\eta: (x,y) \mapsto (x,-y)$. For any hyperelliptic curve $X \in \M_g$ the hyperelliptic cover $\pi_\eta: X \to \P^1$ of degree two is  branched over $2g+2$ points on $\P^1$. These points uniquely determine the curve $X$ (see Figure~\ref{fig:branchloci}). 
We describe the three above families in terms of their hyperelliptic covers and exhibit their automorphisms of locally maximal order. Let $\xi_k$ be a primitive $k$-th root of unity.

\noindent
{\bf Apolar curve.}
The hyperelliptic curve ${\mathcal A}_g$ is a double cover over $\P^1$ branched over $(2g+2)$-th roots of unity (see Figure~\ref{fig:apolar}). It has an automorphism of order $2g+2$ given by:
$$
f_{{\mathcal A}_g}: (x,y) \mapsto (\xi_{2g+2} x , y).
$$
We will call ${\mathcal A}_g$ an {\em apolar curve}.

\noindent
{\bf Polar curve.}
The hyperelliptic curve ${\mathcal P}_g$ is a double cover over $\P^1$ branched over $\infty$ and $(2g+1)$-st roots of unity (see Figure~\ref{fig:polar}). An automorphism of order $2g+1$ is given by:
$$
f_{{\mathcal P}_g}: (x,y) \mapsto (\xi_{2g+1} x , y).
$$
We will call ${\mathcal P}_g$ a {\em polar curve}.

\noindent
{\bf Dipolar curve.}
The hyperelliptic curve ${\mathcal D}_g$ is a double cover over $\P^1$ branched over $0$, $\infty$ and $2g$-th roots of unity (see Figure~\ref{fig:dipolar}). An automorphism of order $4g$ is given by:
$$
f_{{\mathcal D}_g}: (x,y) \mapsto (\xi_{2g} x , \xi_{4g} y).
$$
We will call ${\mathcal D}_g$ a {\em dipolar curve}.

\noindent
{\bf Eigenbasis.}
For any $g>1$ the forms:
$$
\omega_k = x^{k-1} dx/y, \ k = 1,\ldots,g
$$
define a basis of 1-forms on ${\mathcal A}_g$, ${\mathcal P}_g$ and ${\mathcal D}_g$. The induced action of $f_{{\mathcal A}_g}, f_{{\mathcal P}_g}$ and  $f_{{\mathcal D}_g}$ on these forms is given by:
\begin{equation} \label{eigenvalueA}
f_{{\mathcal A}_g}^*(\omega_k) = \xi_{2g+2}^{k} \cdot \omega_k
\end{equation}
\begin{equation} \label{eigenvalueP}
f_{{\mathcal P}_g}^*(\omega_k) = \xi_{2g+1}^{k} \cdot \omega_k
\end{equation}
\begin{equation} \label{eigenvalueB}
f_{{\mathcal D}_g}^*(\omega_k) =\xi_{4g}^{2k-1} \cdot \omega_k
\end{equation}

Therefore $\{ \omega_k \mid k=1,\ldots,g\}$ is an eigenbasis for the automorphisms $f_{{\mathcal A}_g}$, $f_{{\mathcal P}_g}$ and $f_{{\mathcal D}_g}$. Next we find the multiplicities of the zeroes of these eigenforms, in other words determine the strata of Abelian differentials that contain $({\mathcal A}_g, \omega_k)$, $({\mathcal P}_g, \omega_k)$ and $({\mathcal D}_g, \omega_k)$.

\noindent
{\bf Strata.} 
The eigenform $\omega_k$ on ${\mathcal A}_g$ has two zeroes of order $k-1$ at the preimages of $0 \in \P^1$ and two zeroes of order $g-k$ at the preimages of $\infty$. Indeed, the hyperelliptic cover is unbranched over $0$ and $\infty$. Therefore for $(x=0,y=\pm i)$ one can use $x$ as a coordinate and $x^{k-1} dx/y$ has zero of order $k-1$ at each of these points. The only other zeroes of $\omega_k$ are in the fiber over $\infty$, they are switched by hyperelliptic involution, and therefore they have the same order. Considering that the sum of orders of all zeroes of $\omega_k$ is $2g-2$, each of the zeroes over $\infty$ has order $g-k$. Therefore, $({\mathcal A}_g, \omega_k) \in \H(k-1,k-1,g-k,g-k)$.

The eigenform $\omega_k$ on ${\mathcal P}_g$ has a zero of order $2g-2k$ over $\infty \in \P^1$ and two zeroes of order $k-1$ over $0 \in \P^1$. Indeed, the hyperelliptic cover is unbranched over $0$ and hence $\omega_k$ has two zeroes of orders $k-1$ at the preimages. At the same time the cover has only one preimage over $\infty$, and since the sum of orders of all zeroes of $\omega_k$ is $2g-2$, this preimage is a zero of order $2g-2k$. Therefore $({\mathcal P}_g, \omega_k) \in \H(k-1,k-1,2g-2k)$.

Similarly, $({\mathcal D}_g,\omega_k) \in \H(2k-2,2g-2k)$.

\begin{table}[H]
\centering
\begin{tabular}{ | C{0.5cm} | C{2.7cm} | C{3.5cm} | C{2cm} | C{3.5cm} | C{1.7cm} | }
\hline
& & & & &\\
 & Equation & Automorphism $f$ & Eigenform $\omega_k$ & Stratum & $f^*(\omega_k)$  \\ 
& & & & &\\
\hline
& & & & &\\
${\mathcal A}_g$ & $y^2=x^{2g+2}-1$ & $(x,y) \mapsto (\xi_{2g+2} x , y)$ & $\displaystyle \frac{x^{k-1}dx}y$ & $\H((k-1)^2,(g-k)^2)$ & $\xi_{2g+2}^{k} \cdot \omega_k$   \\ 
& & & & &\\
\hline
& & & & &\\
${\mathcal P}_g$ & $y^2=x^{2g+1}-1$ & $(x,y) \mapsto (\xi_{2g+1} x , y)$ & $\displaystyle\frac{x^{k-1}dx}y$& $\H((k-1)^2,2g-2k)$ & $\xi_{2g+1}^{k} \cdot \omega_k$ \\  
& & & & &\\
\hline
& & & & &\\
${\mathcal D}_g$ & $y^2=x(x^{2g}-1)$ & $(x,y) \mapsto (\xi_{2g} x , \xi_{4g}y)$ & $\displaystyle\frac{x^{k-1}dx}y$ & $\H(2k-2,2g-2k)$ & $\xi_{4g}^{2k-1} \cdot \omega_k$ \\
& & & & &\\
\hline

\end{tabular}
\caption{Hyperelliptic curves with many automorphisms and their eigenforms.}
\label{table1}
\end{table}


\section{Translation surfaces and eigenforms}
\label{sec:eigen}
In this section we construct polygonal representations of translation surfaces corresponding to all eigenforms of hyperelliptic curves ${\mathcal A}_g, {\mathcal P}_g$ and ${\mathcal D}_g$. The content of this section is summarized in Table~\ref{table2}.

\noindent
{\bf Spingon.}
For any integers $N>1$, $1 \le k < (N+1)/2$ we define a spingon $S^k_N$ as follows. First, consider a rhombus $R_0 = OA_0B_0C_0$ with an angle $2\pi k/N$ at the vertex $O$. Let $r: \R^2 \to \R^2$ be the rotation by $2\pi k/N$ with the center at $O$. The rotation $r$ generates a cyclic subgroup of isometries of $\R^2$ isomorphic to $\Z/N\Z$. For any $i \in \Z/N\Z$ define $R_i = OA_iB_iC_i$ to be an image of $R_0$ under $r^i$. Now identify the following pairs of parallel sides by parallel translations:
$$
OC_i \sim OA_{i+1} \mbox{ and } A_iB_i \sim C_{i+1}B_{i+1}, \mbox{ for all } i \in \Z/N\Z.
$$
The resulting polygon with the above identifications of parallel sides is a translation surface that we will call a {\em spingon} and will denote by $S^k_N$. On  Figure~\ref{fig:spingons} we provide examples of spingons in genus~3.

\begin{figure}[H]
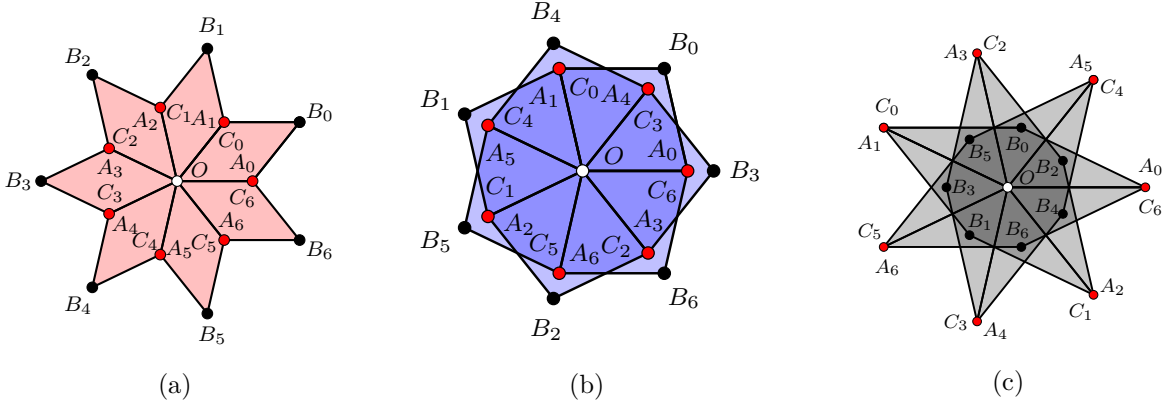

\begin{subfigure}{.33\textwidth}
  \centering
  \includestandalone[width=\linewidth]{S71}
  \caption{}
  \label{fig:spingon(1,7)}
\end{subfigure}%
\begin{subfigure}{.33\textwidth}
  \centering
  \includestandalone[width=\linewidth]{S72}
  \caption{}
  \label{fig:spingon(2,7)}
\end{subfigure}
\begin{subfigure}{.33\textwidth}
  \centering
  \includestandalone[width=\linewidth]{S73}
  \caption{}
  \label{fig:spingon(3,7)}
\end{subfigure}
\caption{The spingons (a) $S^1_7$, (b) $S^2_7$ and (c) $S^3_7$.}
\label{fig:spingons}
\end{figure}

\noindent
{\bf Half-spingon.} For any integers $N>1$ and $1 \le k \le N/2$ we define half-spingon $H^k_N$ as follows. Start with the rhombus $P_0 = OA_0B_0C_0$ with an angle $\pi (2k-1)/N$ at the vertex $O$. A union of images of $P_0$ under $N-1$ successive rotations around $O$ by an angle $\pi (2k-1)/N$ is a polygon that consists of the rhombi $P_i$. This time, because $2k-1$ is odd, the sides $OA_0$ and $OC_{N-1}$ are given by the opposite vectors. The identifications are:
\begin{align*}
OA_0 & \sim B_{N-1}A_{N-1};\ C_0B_0\sim C_{N-1}O;\\
OC_i & \sim OA_{i+1}, \mbox{ for } i=0,\ldots,N-2;\\
A_iB_I & \sim C_{i+1}B_{i+1}, \mbox{ for } i=0,\ldots,N-2.
\end{align*}
The resulting polygon with these identifications is a translation surface that we will call a {\em half-spingon} and will denote it by $H^k_N$. On Figure~\ref{fig:half-spingons} we provide examples of half-spingons in genus 3. We only label vertices on figure (a), the reader can reconstruct the identifications for the other two figures.

\begin{figure}[H]
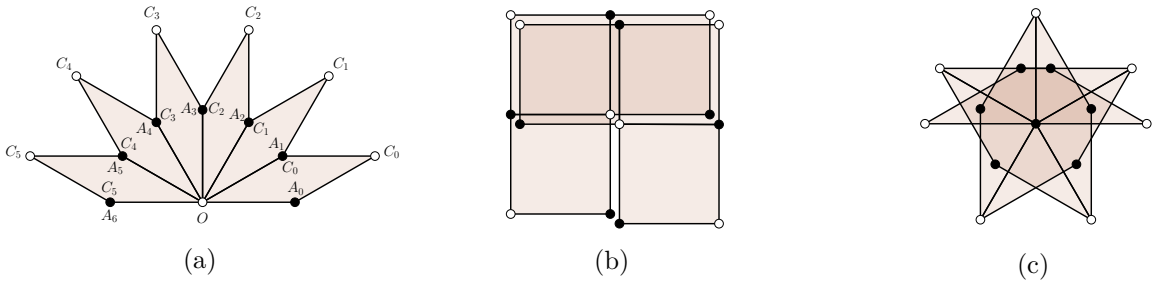

\begin{subfigure}{.33\textwidth}
  \centering
  \includestandalone[width=\linewidth]{H61}
  \caption{}
  \label{fig:half-spingon(1,6)}
\end{subfigure}%
\begin{subfigure}{.33\textwidth}
  \centering
  \includestandalone[width=\linewidth]{H62}
  \caption{}
  \label{fig:half-spingon(2,6)}
\end{subfigure}
\begin{subfigure}{.33\textwidth}
  \centering
  \includestandalone[width=\linewidth]{H63}
  \caption{}
  \label{fig:half-spingon(3,6)}
\end{subfigure}
\caption{The half-spingons (a) $H^1_6$, (b) $H^2_6$ and (c) $H^3_6$.}
\label{fig:half-spingons}
\end{figure}

Now we are ready to prove Theorem~\ref{thm:main}.

\begin{proof}[Proof of Theorem~\ref{thm:main}]

\noindent
{\bf Apolar curves ${\mathcal A}_g$.} Let $g>1$ be any integer. We will show that $S^k_{2g+2}$ represents an eigenform $\omega_k = x^{k-1} dx/y$ on the apolar hyperelliptic curve ${\mathcal A}_g$.

Note that all vertices $B_i$ are identified into one that we denote by $B$. Since the number of rhombi $2g+2$ is even, all $A_i$ and $C_i$ are identified into two other vertices that we denote by $A$ and $C$. Hence there are four vertices: $O$, $A$, $B$ and $C$.

We first identify which stratum does translation surface $S^k_{2g+2}$ belongs. The total angle around $O$ and $B$ is $2\pi k$, and around $A$ and $C$ it is $(2g+2)(\pi - 2\pi k/(2g+2)) = 2\pi(g-k+1)$. Hence $S^k_{2g+2}$ belongs to $\H(k-1,k-1,g-k,g-k)$. In particular, the genus of the underlying algebraic curve of $S^k_{2g+2}$ is indeed $g$.

Now let us show that the underlying Riemann surface of $S^k_{2g+2}$ is hyperelliptic. The hyperelliptic involution is given by rotating each rhombus $R_i = OA_iB_iC_i$ around its center by $180^\circ$. Because of the side identifications this is a well-defined map from $S^k_{2g+2}$ to itself. It has $2g+2$ fixed points: one at the center of each of the $2g+2$ rhombi. This implies that $S^k_{2g+2}$ is hyperelliptic.

The automorphism $f_{{\mathcal A}_g}$ of order $2g+2$ is given by rotating $S^k_{2g+2}$ around $O$ by the angle $2\pi k /(2g+2)$ sending each $R_i$ to $R_{i+1}$. Since hyperelliptic curve of genus $g$ with an automorphism of order $2g+2$ is unique we conclude that $S^k_{2g+2}$ is supported on the apolar curve ${\mathcal A}_g$. Finally $S^k_{2g+2} \in \H(k-1,k-1,g-k,g-k)$ is preserved up to a scalar by $f$
and we conclude that $S^k_{2g+2} = ({\mathcal A}_g, \omega_k)$. Note that the eigenvalue of the automorphism $f_{{\mathcal A}_g}$ is $\xi_{2g+2}^{k}$, which agrees with~(\ref{eigenvalueA}).

\noindent
{\bf Polar curves ${\mathcal P}_g$.} Next we will show that $S^k_{2g+1}$ represents an eigenform $\omega_k = x^{k-1} dx/y$ on the polar hyperelliptic curve ${\mathcal P}_g$.

In this case all vertices $B_i$ are identified into one that we denote by $B$, and all $A_i$ and $C_i$ are identified into another that we denote by $A$. Hence there are three vertices: $O$, $A$ and $B$ with total angles $2\pi k$, $2(2g+1)(\pi - 2\pi k/(2g+1))= 2\pi(2g-2k+1)$ and $2\pi k$ respectively. Hence $S^k_{2g+1}$ belongs to $\H(k-1,k-1,2g-2k)$. In particular, the genus of the underlying algebraic curve of $S^k_{2g+1}$ is indeed $g$.

Similarly the hyperelliptic involution is given by rotating each rhombus $R_i = OA_iB_iC_i$ around its center by $180^\circ$. It has $2g+2$ fixed points: the point $A$ and the center of each of the $2g+1$ rhombi. This implies that $S^k_{2g+1}$ is hyperelliptic.

The automorphism $f_{{\mathcal P}_g}$ of order $2g+1$ is given by the rotation around $O$ by $2\pi k /(2g+1)$. 
Since the hyperelliptic curve of genus $g$ with an automorphism of order $2g+1$ is unique, the 1-form of $S^k_{2g+1} \in \H(k-1,k-1,2g-2k)$ is preserved by $f_{{\mathcal P}_g}$ up to scalar, we conclude that $S^k_{2g+1} = ({\mathcal P}_g, \omega_k)$. Again, the eigenvalue of the automorphism $f_{{\mathcal P}_g}$ is $\xi^{k}_{2g+1}$, which agrees with~(\ref{eigenvalueP}).

\noindent
{\bf Dipolar curves ${\mathcal D}_g$.}
Next we will show that $H^{k}_{2g}$ represents an eigenform $\omega_k = x^{k-1} dx/y$ on the polar hyperelliptic curve ${\mathcal D}_g$.

The vertices $B_i$ and $O$ are identified into one that we denote by $O$, and the vertices $A_i$ and $C_i$ are identified into another that we denote by $A$. Hence there are two vertices: $O$ and $A$ with total angles $2\pi (2k-1)$ and  $2\cdot2g(\pi - 2\pi(2k-1)/4g)= 2\pi(2g-2k+1)$ respectively. Hence $H^{k}_{2g}$ belongs to $\H(2k-2,2g-2k)$. In particular, the genus of the underlying algebraic curve of $H^{k}_{2g}$ is indeed $g$.

The hyperelliptic involution is given by rotating each rhombus $P_i = OA_iB_iC_i$ around its center by $180^\circ$. It has $2g+2$ fixed points: the points $O$ and $A$, and a point at the center of each of the $2g$ rhombi. This implies that $H^{k}_{2g}$ is hyperelliptic.

The automorphism $f_{{\mathcal D}_g}$ of order $4g$ is given by sending each $R_i$ to $R_{i+1}$ via rotation around $O$ by $\pi (2k-1) /2g$. Since the hyperelliptic curve of genus $g$ with an automorphism of order $4g$ is unique, the 1-form of $H^{k}_{2g} \in \H(2k-2,2g-2k)$ is preserved by $f_{{\mathcal D}_g}$ up to scalar, we conclude that $H^{k}_{2g} = ({\mathcal D}_g, \omega_k)$. Finally, the eigenvalue of the automorphism $f_{{\mathcal D}_g}$ is $\xi_{4g}^{2k-1}$, which agrees with~(\ref{eigenvalueP}).
\end{proof}

\begin{remark}
\label{remark}
Note that apolar and dipolar curves admit an extra automorphism $\iota: (x,y) \mapsto (1/x, iy/x^{g+1})$.  Notice that $\iota^2$ is the hyperelliptic involution. Since $\iota$ commutes with hyperelliptic involution it descends to the sphere and acts there by switching the North and the South poles. The action of $\iota$ on eigenforms is given by $\iota^*(\omega_k) = i \cdot \omega_{g-k+1}$. Therefore translation surfaces represented by $S_{2g+2}^k$ and $S_{2g+2}^{g-k+1}$ (similarly, $H^{k}_{2g}$ and $H^{2(g-k+1)}_{2g}$) are the same up to rotation. This can be observed in the construction of spingons (and half-spingons) by switching the center of rotation from $O$ to $A$.
\end{remark}

\noindent
{\bf Familiar examples.}
The collection of described eigenforms contains familiar families of examples of symmetric translation surfaces like regular $n$-gons and double $n$-gons. Below we give a more specific correspondence.

Recall that a {\em regular $n$-gon} is a translation surface obtained by identifying opposite sides of a regular $n$-gon via parallel translations. A {\em double $n$-gon} is a translation surface obtained by taking a regular $n$-gon together with its copy rotated by $\pi$, and identifying the sides with their rotational images via parallel translations. Below we identify, which eigenforms and spingons represent regular and double $n$-gons.

\begin{proposition}
\label{prop:correspondance}
The following identifications hold:

\begin{enumerate}
\item[(A)] Double $(2g+2)$-gon coincides with the eigenforms $({\mathcal A}_g,\omega_1) = ({\mathcal A}_g, \omega_g)$ also represented by the spingons $S_{2g+2}^1 = S_{2g+2}^{g}$ up to a complex scale factor.

\item[(B)] Double $(2g+1)$-gon coincides with the eigenform $({\mathcal P}_g,\omega_1)$ also represented by the spingon $S_{2g+1}^{1}$ up to a complex scale factor.

\item[(C)] Regular $(4g+2)$-gon coincides with the eigenform $({\mathcal P}_g,\omega_g)$ also represented by the spingon $S_{2g+1}^{g}$ up to a complex scale factor.

\item[(D)] Regular $(4g)$-gon coincides with the eigenforms $({\mathcal D}_g, \omega_1) = ({\mathcal D}_g, \omega_g)$ also represented by the half-spingons $H_{2g}^1 = H_{2g}^{g}$ up to a complex scale factor.
\end{enumerate}
\end{proposition}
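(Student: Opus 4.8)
The argument is the one already used for Theorem~\ref{thm:main}. By that theorem, $S^k_N$ and $H^k_N$ represent the indicated eigenforms; by Remark~\ref{remark}, $({\mathcal A}_g,\omega_1)$ and $({\mathcal A}_g,\omega_g)$ (resp.\ $({\mathcal D}_g,\omega_1)$ and $({\mathcal D}_g,\omega_g)$) are isomorphic translation surfaces, and are represented by $S^1_{2g+2}$ and $S^g_{2g+2}$ (resp.\ $H^1_{2g}$ and $H^g_{2g}$). So in each of (A)--(D) it remains to identify the relevant regular or double $n$-gon, as a translation surface, with the stated pair; the complex scale factor in the statement is just the ambiguity inherent in an eigenform.

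\textbf{The template.} For a fixed one of the four polygons $Q$ I would carry out the four steps of Section~\ref{sec:eigen}: \emph{(i)} group the vertices into classes and sum the interior angles of $Q$ within each class to read off the stratum; \emph{(ii)} realize the hyperelliptic involution of $Q$ as a point reflection with $2g+2$ fixed points, so that $Q$ is hyperelliptic of genus $g$; \emph{(iii)} produce a rotational automorphism of $Q$ of the expected order and quote the uniqueness of the hyperelliptic curve of genus $g$ with such an automorphism to pin down the underlying curve; \emph{(iv)} note that the form $dz$ of $Q$ is multiplied by a root of unity under that rotation, hence is an eigenform, and match the stratum from (i) against Table~\ref{table1} to see which $\omega_k$ it is. For the regular $(4g)$-gon with opposite sides identified all $4g$ vertices form one class of cone angle $2\pi(2g-1)$, so $Q\in\H(2g-2)$; the order-$4g$ rotation by $2\pi/(4g)$ and the point reflection (the hyperelliptic involution, with $2g+2$ fixed points) put $Q$ on ${\mathcal D}_g$, and the stratum forces $\omega_1$ (hence also $\omega_g$): this is (D). For the regular $(4g+2)$-gon the $4g+2$ vertices fall into two classes of cone angle $2\pi g$, so $Q\in\H(g-1,g-1)$; here the square of the order-$(4g+2)$ rotation has order $2g+1$, putting $Q$ on ${\mathcal P}_g$, and the stratum forces $\omega_g$: this is (C).

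\textbf{The double polygons.} The same template handles (A) and (B); the only additional input is the combinatorics of the gluing. For the double $(2g+1)$-gon all $2(2g+1)$ vertices are identified into a single cone point of angle $2\pi(2g-1)$, so $Q\in\H(2g-2)$; the point reflection \emph{exchanging the two polygons} is the hyperelliptic involution, with $2g+2$ fixed points (the $2g+1$ edge midpoints and the cone point), and the simultaneous rotation of both copies by $2\pi/(2g+1)$ is an order-$(2g+1)$ automorphism, so $Q$ lies on ${\mathcal P}_g$ and equals $({\mathcal P}_g,\omega_1)$: this is (B). For the double $(2g+2)$-gon --- the slightly unusual ``double of an even polygon'' --- the $2(2g+2)$ vertices split into two classes of cone angle $2\pi g$, so $Q\in\H(g-1,g-1)$; again the polygon-swapping point reflection is the hyperelliptic involution (its $2g+2$ fixed points are now exactly the edge midpoints, and it interchanges the two cone points), and the simultaneous rotation by $2\pi/(2g+2)$ has order $2g+2$, so $Q$ lies on ${\mathcal A}_g$ and equals $({\mathcal A}_g,\omega_1)=({\mathcal A}_g,\omega_g)$: this is (A).

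\textbf{Main obstacle.} There is no conceptual difficulty beyond Theorem~\ref{thm:main}: the work is in step~(i), counting vertex classes and cone angles, and in naming the hyperelliptic involution in step~(ii) --- for the double $n$-gons it is the point reflection that \emph{swaps} the two copies, not the rotation of each copy by $\pi$, and one must check it has precisely $2g+2$ fixed points. A more concrete alternative, in the pictorial spirit of the note, is to produce the explicit scissors congruence directly: cut each regular or double $n$-gon into Euclidean triangles along segments from a suitable vertex (or from the center) and reassemble the pieces by parallel translations into the corresponding spingon $S^k_N$ or half-spingon $H^k_N$, which reproves (A)--(D) while exhibiting the equality of polygonal representations.
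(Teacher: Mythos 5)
Your proposal is correct, but it takes a different route from the paper, which offers no written argument at all: it states ``we leave the proof to the reader'' and instead exhibits, in Figures~\ref{fig:doublegon}, \ref{fig:regular14gon} and \ref{fig:regular12gon}, explicit cut-and-reglue (scissors congruence) identifications of the double and regular $n$-gons with the corresponding spingons and half-spingons, after which Theorem~\ref{thm:main} and Remark~\ref{remark} finish the job. Your primary argument bypasses the dissection entirely and reruns the four-step template of the proof of Theorem~\ref{thm:main} directly on the $n$-gons; the computations are all right, including the less standard cases: for the double $(2g+2)$-gon the $2(2g+2)$ vertices do split into two classes of cone angle $2\pi g$, the polygon-swapping point reflection has exactly the $2g+2$ edge midpoints as fixed points while exchanging the two cone points, and the stratum $\H(g-1,g-1)$ read against Table~\ref{table1} does force $k\in\{1,g\}$; the counts for (B), (C), (D) are likewise correct (for (C) you should state explicitly that the $(2g+1)$-st power of the generating rotation is the hyperelliptic involution, with the center plus the $2g+1$ edge midpoints as its $2g+2$ fixed points, since the uniqueness statement you invoke is about \emph{hyperelliptic} curves). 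The trade-off: your template argument is uniform and rigorous but only identifies each $n$-gon with the relevant eigenform up to scale via the uniqueness of the underlying curve, i.e., it proves the two polygonal pictures represent the same $(X,\omega_k)$ without exhibiting the isometry between them; the paper's pictorial scissors congruence, which you correctly flag as the concrete alternative in your last paragraph, produces that isometry explicitly but must be drawn case by case.
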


We leave the proof to the reader. On Figures~\ref{fig:doublegon}, \ref{fig:regular14gon} and \ref{fig:regular12gon} we demonstrate how to cut and reglue regular and double $n$-gons to obtain spingons or half-spingons.

\begin{figure}[H]
\centering
\includegraphics[width=0.6\linewidth]{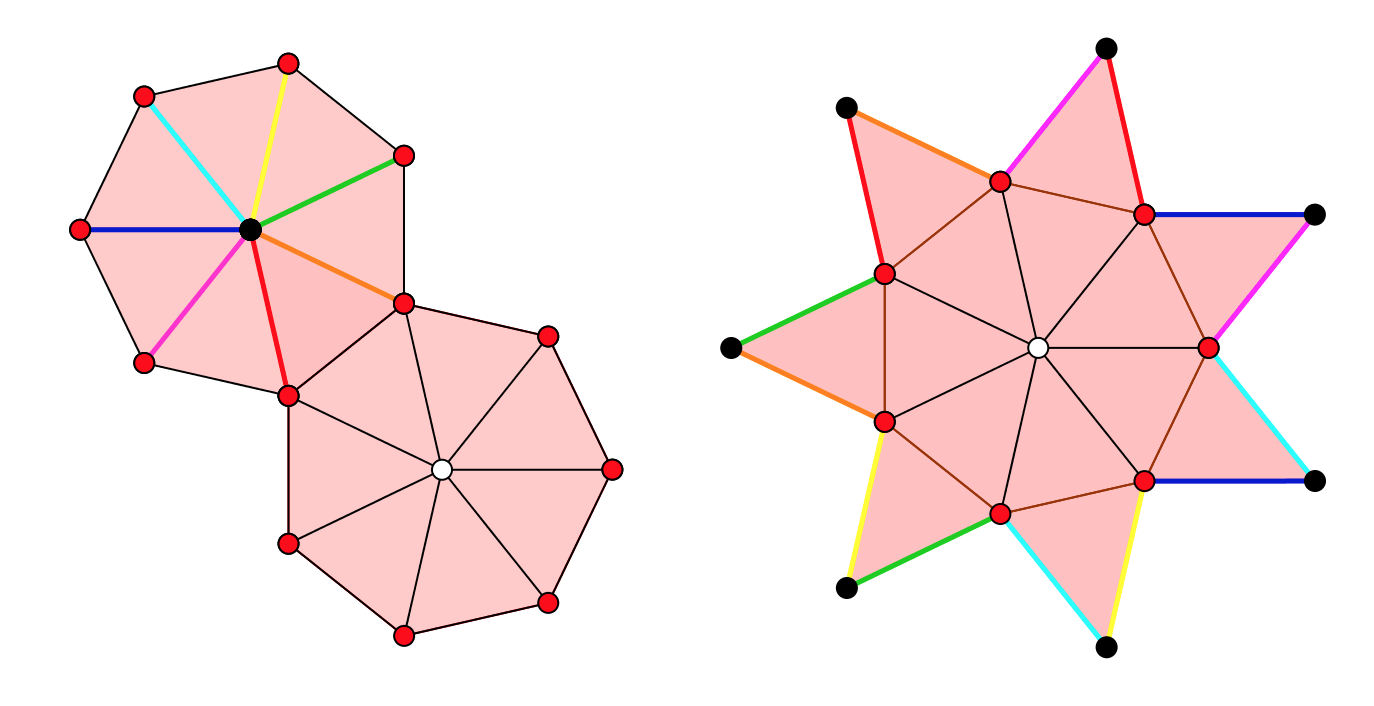}
\caption{Double 7-gon is the same as the spingon $S_7^1$. See Proposition~\ref{prop:correspondance}~(B).}
\label{fig:doublegon}
\end{figure}

\begin{figure}[H]
\centering
\includegraphics[width=0.6\linewidth]{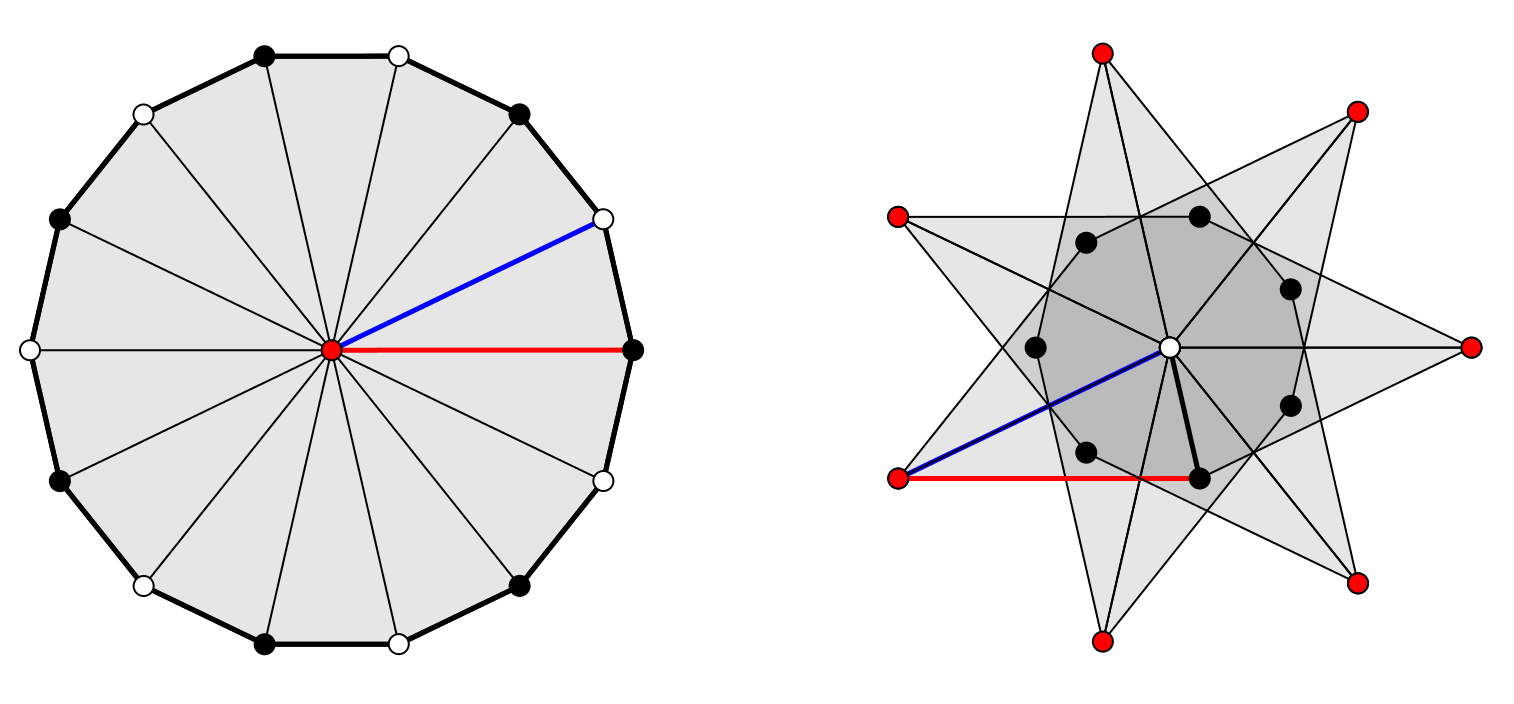}
\caption{Regular 14-gon is the same as the spingon $S_7^3$. See Proposition~\ref{prop:correspondance}~(C).}
\label{fig:regular14gon}
\end{figure}

\begin{figure}[H]
\centering
\includegraphics[width=0.6\linewidth]{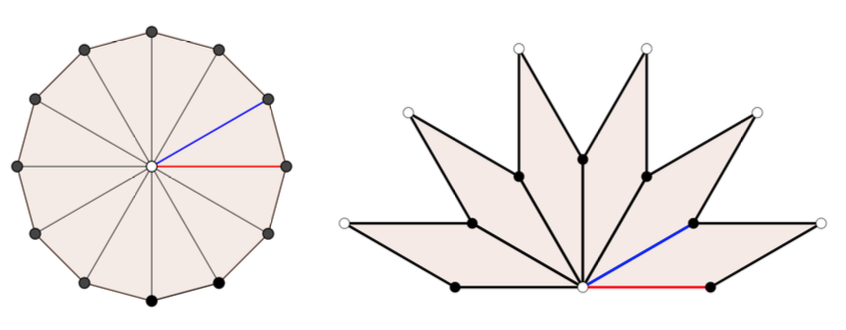}
\caption{Regular 12-gon is the same as the half-spingons $H_6^1=H_6^3$. See Proposition~\ref{prop:correspondance}~(D).}
\label{fig:regular12gon}
\end{figure}

\noindent
{\bf Examples in low genus.}
In Tables~\ref{fig:eigenforms2} and~\ref{fig:eigenforms3} we present more traditional polygonal representations of all eigenforms on hyperelliptic curves $\mathcal{A}_g$, $\mathcal{P}_g$ and $\mathcal{D}_g$ in genus $2$ and $3$ respectively. Note that when identifications of parallel edges are ambiguous or not obvious, the identified edges are labeled by the same color. One can observe that a lot of these eigenforms come from regular $n$-gons and double $n$-gons, but not all of them. For example, $(\mathcal{P}_3, \omega_2)$ is new.

Another interesting observation is that $(\mathcal{D}_{2k-1}, \omega_k)$ are particular examples of {\em square-tiled cyclic covers} introduced in \cite{FMZ}. We refer the reader to this paper for the background and definition. The equations of $\mathcal{D}_{2k-1}$ can be rewritten in the following form: $y^{2g} = (x-1)(x+1)(x-i)^{g}(x+i)^g$ (compare to \cite[Section~2.1]{FMZ}). The automorphism that realizes the corresponding cyclic cover over the Riemann sphere is given by a composition of translation by vector $(1,1)$ and rotation by $\pi$. Although this does not work for $(\mathcal{A}_{2k-1}, \omega_k)$, the quotient by the similar automorphism is a cyclic cover over a torus. 

\begin{table}[H]
  \centering
  \begin{tabular}{ | c | >{\centering\arraybackslash}p{7cm} | >{\centering\arraybackslash}p{7cm} | }
    \hline
    & $\omega_1$ & $\omega_2$ \\ \hline
    \multirow{2}{0.5cm}{$\mathcal{A}_2$}
    &
    \multicolumn{2}{|c|}{
    \begin{minipage}{.3\textwidth}
    \centering
    \includegraphics[height = 2cm]{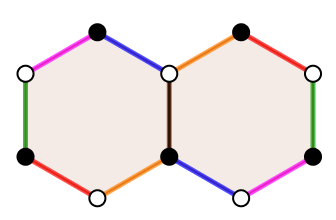}
    \end{minipage}} \\
    &    
    \multicolumn{2}{|c|}{$S_6^1 = S_6^2$}
    \\
    \hline
    \multirow{2}{0.5cm}{$\mathcal{P}_2$}
    &    
    \begin{minipage}{.3\textwidth}
    \centering
    \includegraphics[height = 1.75cm]{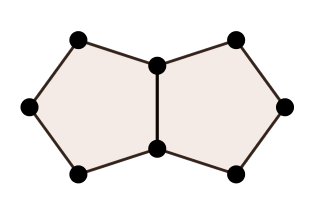}
    \end{minipage}
    &
    \begin{minipage}{.3\textwidth}
    \centering
    \includegraphics[height = 2cm]{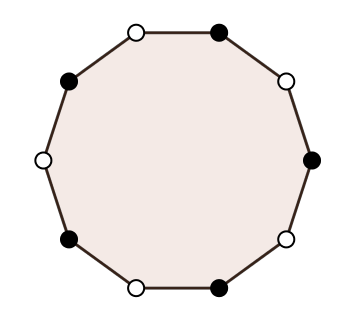}
    \end{minipage} \\
    &
    $S_5^1$
    &        
    $S_5^2$
    \\ \hline
    \multirow{2}{0.5cm}{$\mathcal{D}_2$}
    &
    \multicolumn{2}{|c|}{
    \begin{minipage}{.3\textwidth}
    \centering
    \includegraphics[height = 2cm]{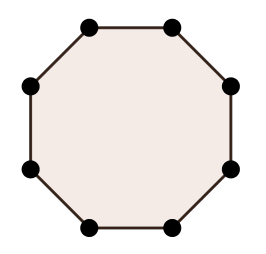}
    \end{minipage}} \\
    &
    \multicolumn{2}{|c|}{$H_4^1 = H_4^2$}
    \\ \hline
  \end{tabular}
  \caption{Eigenforms on hyperelliptic curves with many automorphisms in genus $2$.}
  \label{fig:eigenforms2}
\end{table}

\begin{table}[H]
  \centering
  \begin{tabular}{ | c | >{\centering\arraybackslash}p{4.5cm} | >{\centering\arraybackslash}p{4.5cm} | >{\centering\arraybackslash}p{4.5cm} | }
    \hline
    & $\omega_1$ & $\omega_3$ & $\omega_2$ 
    \\
    \hline
    \multirow{2}{0.5cm}{$\mathcal{A}_3$}
    &
    \multicolumn{2}{|c|}{
    \begin{minipage}{.3\textwidth}
    \centering
    \includegraphics[height = 2cm]{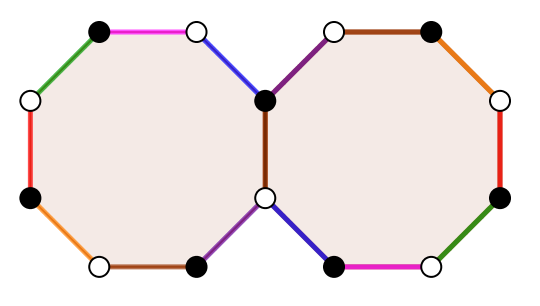}
    \end{minipage}}
    &
    \begin{minipage}{.25\textwidth}
    \centering
    \includegraphics[height = 2cm]{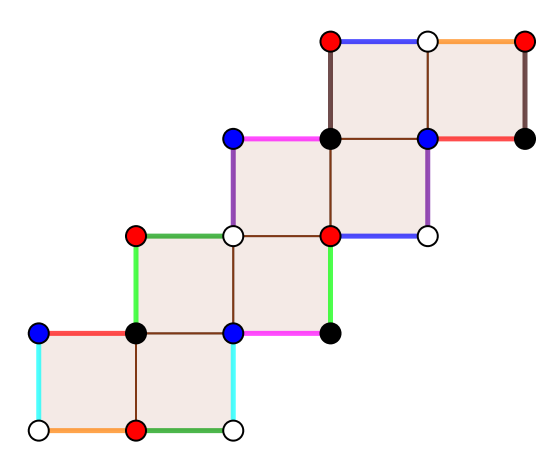}
    \end{minipage}
    \\
    &
    \multicolumn{2}{|c|}{$S_8^1 = S_8^3$} 
    &    
    $S_8^2$
    \\
    \hline
    $\mathcal{P}_3$
    &    
    \begin{minipage}{.25\textwidth}
    \centering
    \includegraphics[height = 2cm]{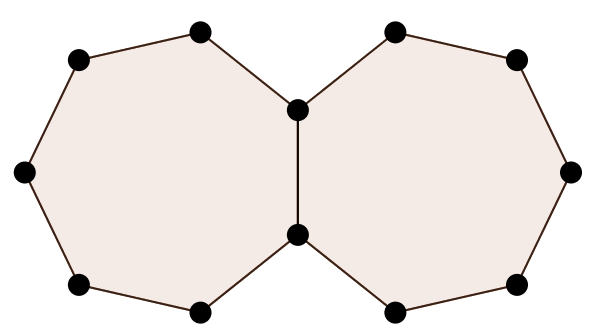}
    \end{minipage}
    $S_7^1$
    &    
    \begin{minipage}{.25\textwidth}
    \centering
    \includegraphics[height = 1.9cm]{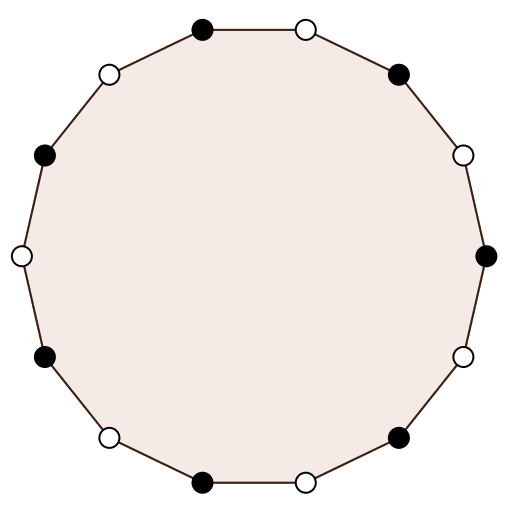}
    \end{minipage}
    $S_7^3$
    &    
    \begin{minipage}{.25\textwidth}
    \centering
    \includegraphics[height = 1.9cm]{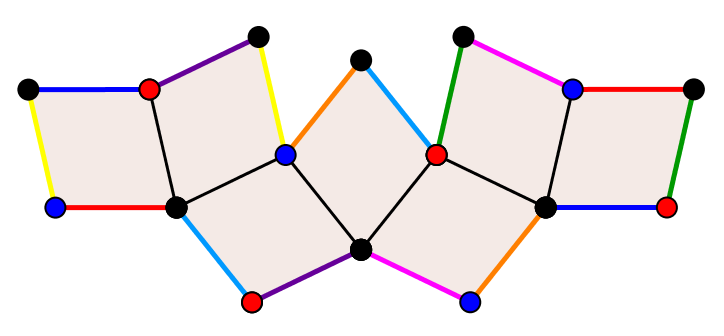}
    \end{minipage}
    $S_7^2$
    \\ \hline
    \multirow{2}{0.5cm}{$\mathcal{D}_3$}
    &
    \multicolumn{2}{|c|}{
    \begin{minipage}{.3\textwidth}
    \centering
    \includegraphics[height = 2cm]{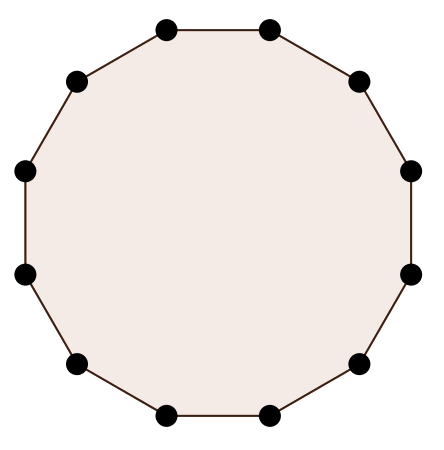}
    \end{minipage}}
    &
    \begin{minipage}{.25\textwidth}
    \centering
    \includegraphics[height = 2cm]{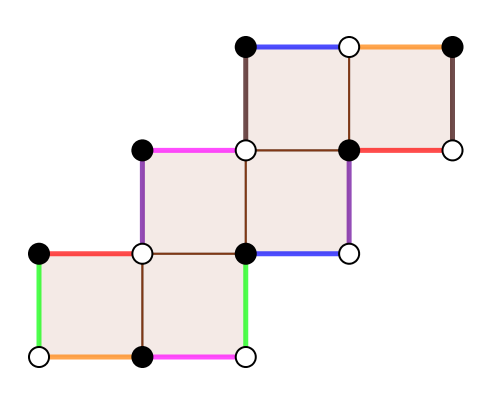}
    \end{minipage}
    \\
    &
    \multicolumn{2}{|c|}{$H_6^1 = H_6^3$}
    &    
    $H_6^2$ 
    \\ \hline
  \end{tabular}
  \caption{Eigenforms on hyperelliptic curves with many automorphisms in genus $3$.}
  \label{fig:eigenforms3}
\end{table}

\begin{table}[H]
\centering
\begin{tabular}{ | C{1.5cm} | C{4cm} | C{4cm} | C{5cm} | }
\hline
& & & \\
& Polygon & Rhombus angle $\angle O$ & Conical points \\ 
& & & \\
\hline
& & & \\
$({\mathcal A}_g,\omega_k)$  & $S^{k}_{2g+2}$ & $\displaystyle \frac{2\pi k}{2g+2}$ & \begin{tabular}{@{}c@{}} $O: 2\pi k, B: 2\pi k$ \\ $A: 2\pi(g-k+1)$ \\ $C: 2\pi(g-k+1)$ \end{tabular} \\ 
& & &\\
\hline
& & & \\
$({\mathcal P}_g,\omega_k)$ & $S^{k}_{2g+1}$ & $\displaystyle\frac{2\pi k}{2g+1}$ & \begin{tabular}{@{}c@{}} $O: 2\pi k, B: 2\pi k$ \\ $A=C: 2\pi(2g-2k+1)$ \end{tabular} \\  
& & & \\
\hline
& & & \\
$({\mathcal D}_g,\omega_k)$  & $H^{k}_{2g}$ & $\displaystyle\frac{\pi(2k-1)}{2g}$ & \begin{tabular}{@{}c@{}} $O=B: 2\pi(2k-1)$ \\ $A=C: 2\pi(2g-2k+1)$ \end{tabular} \\
& & & \\
\hline
\end{tabular}
\caption{Eigenforms are translation surfaces obtained by identifying parallel sides of polygons composed of equal rhombi $R_i=OA_iB_iC_i$.}
\label{table2}
\end{table}

\section{Triangulations and rational billiards}
The triangulation of each of the rhombi $R_i = OA_iB_iC_i$ obtained by adding the diagonals gives triangulations of spingons and half-spingons. Such triangulation is invariant under all automorphisms of the curve. Therefore each eigenform $\omega_k$ on $\mathcal{A}_g$, $\mathcal{P}_g$ and $\mathcal{D}_g$ uniquely determines a triangulation of the underlying Riemann surface. In this section we investigate these triangulations.

\begin{figure}[h]
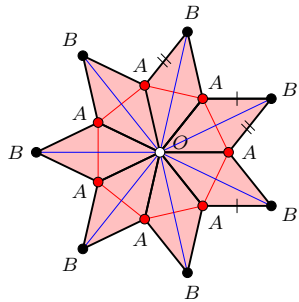

\centering
\includestandalone[width=0.3\linewidth]{S71_v1}
\caption{Triangulation $\mathcal{T}$ of $\mathcal{P}_3$ given by the eigenform $\omega_1$.}
\label{fig:S71_v1}
\end{figure}

We begin by setting some notation. Let $X$ be $\mathcal{A}_g$, $\mathcal{P}_g$ or $\mathcal{D}_g$ and let $N$ be $2g+2$, $2g+1$ or $2g$ respectively. Let $\omega_k$ be the eigenform $x^{k-1}dx/y$. Recall that $\eta: X \to X$ denotes the hyperelliptic involution and $\pi_\eta: X \to \P^1$ is the hyperelliptic cover given by $(x,y)\mapsto x$. The content of this section is summarized by the following proposition:

\begin{proposition}
Consider the triangulation $\mathcal{T}$ of $X$ given by the subdivision of each rhombus $R_i = OA_iB_iC_i$ of $(X,\omega_k)$ by its diagonals. Then:
\begin{enumerate}
    \item[1)] the triangulation $\mathcal{T}$ maps under the hyperelliptic cover to the triangulation of the sphere $\P^1$, which in the complex coordinate has the following vertices: $O$ at $0$, $A$ at $\infty$ and $W_i$'s at all $N$-th roots of unity, and the edges belonging to the radial rays passing through all $2N$-th roots of unity (see Figure~\ref{fig:sphere_triang1});
    \item[2)] the triangulation $\mathcal{T}$ does not depend on the choice of the eigenform $\omega_k$;
    \item[3)] the translation surface $(X,\omega_k)$ is an unfolding of the triangular billiard table with angles:
\begin{align}
    \label{angles1}
    {\mathcal A}_g: & \ \left(\frac{\pi}2 \lowcomma \frac{\pi k}{2g+2} \lowcomma \frac{\pi(g-k+1)}{2g+2}\right)\\
    \label{angles2}
    {\mathcal P}_g: & \left(\frac{\pi}2 \lowcomma \frac{\pi k}{2g+1} \lowcomma \frac{\pi(2g-2k+1)}{4g+2}\right) \\
    \label{angles3}
    {\mathcal D}_g: & \left(\frac{\pi}2 \lowcomma \frac{\pi(2k-1)}{4g} \lowcomma \frac{\pi(2g-2k+1)}{4g}\right).
\end{align}
\end{enumerate}
\end{proposition}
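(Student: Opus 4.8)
The plan is to prove the three parts essentially in the order stated, deriving each from the explicit polygonal description already established in Section~\ref{sec:eigen}, together with the uniqueness of hyperelliptic curves with an automorphism of locally maximal order that was used in the proof of Theorem~\ref{thm:main}.

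For part~1), I would argue that the hyperelliptic involution $\eta$ acts on $(X,\omega_k)$ by rotating each rhombus $R_i = OA_iB_iC_i$ by $\pi$ about its center, as was shown in the proof of Theorem~\ref{thm:main}; this map preserves the diagonals of each rhombus, hence it maps the triangulation $\mathcal{T}$ to itself. Therefore $\mathcal{T}$ descends to a triangulation of the quotient $X/\eta \cong \P^1$. The vertices $O$, $A$, $B$ (and $C$ in the apolar and polar cases) of $\mathcal{T}$ project to $O$, $A$ and the $W_i$ on $\P^1$; the centers of the rhombi, being the fixed points of $\eta$, project to the remaining Weierstrass points. To pin down the complex coordinate, I would use that the automorphism $f$ of order $N$ (resp. $4g$) is realized by rotation about $O$, so on the quotient it acts as multiplication by an $N$-th (resp. $2g$-th) root of unity fixing $0$ and $\infty$; this forces $O \mapsto 0$, $A \mapsto \infty$ and the $W_i$ to the $N$-th roots of unity, matching the branch loci described in Section~\ref{sec:curves}. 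The edges of $\mathcal{T}$ are the images of the sides and diagonals of the $R_i$, which under the covering fill out exactly the radial rays through the $2N$-th roots of unity. One must check the three cases $\mathcal{A}_g, \mathcal{P}_g, \mathcal{D}_g$ separately because the identification pattern of the $A_i, C_i$ and $B_i$ differs, but the argument is uniform.

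Part~2) then follows almost immediately: the triangulation of $\P^1$ described in part~1) — vertices at $0$, $\infty$ and the $N$-th roots of unity, edges along rays through the $2N$-th roots of unity — does not involve $k$ at all, and the branched cover $\pi_\eta: X \to \P^1$ is intrinsic to the curve $X$, so the lifted triangulation $\mathcal{T}$ is determined by $X$ alone. The only subtlety is that a priori $\mathcal{T}$ is a triangulation of the flat surface $(X,\omega_k)$, so one should phrase the independence combinatorially/topologically: the cell structure on the Riemann surface $X$ obtained by lifting the fixed triangulation of $\P^1$ coincides with $\mathcal{T}$ for every $k$.

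For part~3), I would invoke Katok--Zemljakov unfolding: a flat surface admits a fixed-point-free action of a finite rotation group whose quotient triangle (a fundamental domain for the rotation together with $\eta$, i.e. half of one rhombus $R_i$) has a given angle data, precisely when it is the unfolding of the billiard in that triangle. Concretely, one rhombus $R_i$ has angle $2\pi k/N$ at $O$ (resp. $\pi(2k-1)/(2g)$ in the dipolar case) and $\pi - 2\pi k/N$ at $A_i, C_i$; cutting it along a diagonal gives a triangle with a right angle at $B_i$ and the two half-angles, and the full surface is tiled by the reflected copies of this triangle under the group generated by reflections in its sides. Reading off the angles $(\pi/2,\ \pi k/(2g+2),\ \pi(g-k+1)/(2g+2))$ and the analogues for $\mathcal{P}_g$ and $\mathcal{D}_g$ is then just bookkeeping with the rhombus angles recorded in Table~\ref{table2}.

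The main obstacle I anticipate is making part~1) fully rigorous rather than picture-dependent: one must verify that the vertex identifications, combined with the $\eta$- and $f$-actions, really do force the coordinate normalization $O \mapsto 0$, $A \mapsto \infty$, $W_i \mapsto \xi_N^i$ on the nose (up to the obvious $\mathrm{PGL}_2$ ambiguity, which is killed by specifying three points and matching Section~\ref{sec:curves}), and that no edges of $\mathcal{T}$ other than the claimed radial ones appear after projection. The remaining two parts are then formal consequences.
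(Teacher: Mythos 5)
Your treatment of the vertices in part 1) is essentially the paper's: both arguments use the fact that the order-$N$ automorphism $f$ descends under $\pi_\eta$ to the rotation $\rho: x \mapsto \xi_N x$, whose only short orbits are $\{0\}$ and $\{\infty\}$, to place $O$ at $0$ and $A$ at $\infty$, and both identify the $W_i$ with the images of the Weierstrass points at the rhombus centers. The genuine gap is precisely the one you flag at the end and then leave unresolved: why the edges of $\mathcal{T}$ --- which are straight segments in the \emph{flat} metric of $(X,\omega_k)$ --- project to the radial rays through the $2N$-th roots of unity in the \emph{complex} coordinate. Knowing where the vertices go does not determine where the edges go: a flat geodesic between two points need not be a radial arc in the uniformizing coordinate, so ``the sides and diagonals fill out exactly the radial rays'' is an assertion, not an argument. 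The paper closes this gap by passing to the further quotient $(\P^1,q_k)/\rho$, which is the double of a right triangle with vertices $O, W, A$ at $0,1,\infty$; the reflection in the side $OW$ is an anti-holomorphic involution of $\P^1$ fixing $0$, $1$, $\infty$ and fixing the edges of the triangulation, hence it must be $z \mapsto \overline{z}$, so the edges are the real intervals $(-\infty,0)$, $(0,1)$, $(1,\infty)$; lifting back through the degree-$N$ rotation cover then produces exactly the radial rays. Without this step (or an equivalent link between the flat and complex structures), part 1) is not established, and part 2) inherits the gap since it is deduced from part 1).

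Two smaller points. In part 3) your description of the billiard triangle is geometrically off: cutting a rhombus along one diagonal yields an isosceles triangle, not a right triangle, and the right angle of the correct quarter-rhombus triangle $OW_iA_i$ sits at the center $W_i$, not at $B_i$; the angle data you then read off from Table~\ref{table2} is nevertheless the correct one. Also, for the normalization $O\mapsto 0$ versus $O\mapsto\infty$, the paper notes that for $\mathcal{A}_g$ and $\mathcal{D}_g$ the choice is immaterial because of the extra automorphism $\iota$ of Remark~\ref{remark}, whereas for $\mathcal{P}_g$ it is forced because $\pi_\eta$ is unbranched over $0$; your appeal to ``matching the branch loci'' gestures at this but should be made explicit in the polar case.
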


\begin{figure}[H]
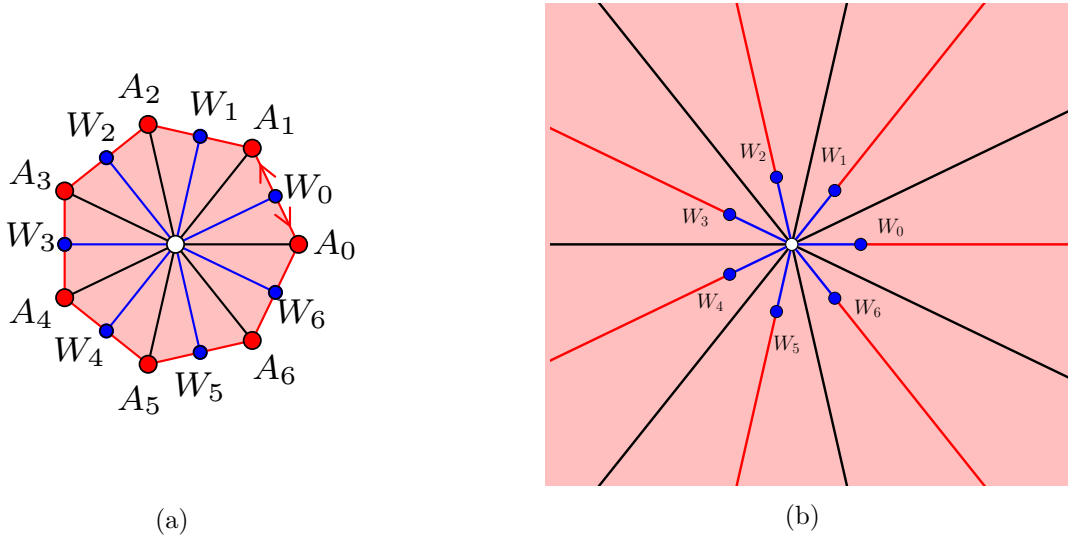

\begin{subfigure}{.5\textwidth}
  \centering
  \includestandalone[width=0.8\linewidth]{S71_v2}
  \caption{}
  \label{fig:sphere_triang2}
\end{subfigure}
\begin{subfigure}{.5\textwidth}
  \centering
  \includestandalone[width=0.85\linewidth]{sphere_triang}
  \caption{}
  \label{fig:sphere_triang1}
\end{subfigure}%
\caption{The image of the triangulation $\mathcal{T}$ under the hyperelliptic cover on $\P^1$ (a) in flat coordinate, (b) in complex coordinate.}
\label{fig:sphere_triang}
\end{figure}

\begin{proof}
We begin by determining the vertices of the triangulation of the sphere in complex coordinate. Let $O \in \P^1$ denote the image of $O$ and $B \in X$, and let $A \in \P^1$ denote the image of $A$ and $C \in X$. Recall that the automorphism $f: X \to X$ has order $N$ (see Table~\ref{table1}). Moreover, $f$ has only two orbits of length less then $N$: $\{O,B\}$ and $\{A,C\}$. At the same time $f$ descends under $\pi_\eta$ to the rotation map $\rho: \P^1 \to \P^1$, given by $x \mapsto \xi_N \cdot x$, whose only two orbits of length less than $N$ are $\{0\}$ and $\{\infty\}$. Therefore $\{0,\infty\} = \{O,A\} \subset \P^1$, and we choose to put $O$ at $0$ and $A$ at $\infty$. Note that for apolar and dipolar curves this choice is due to an extra automorphism of $X$, which is explained Remark~\ref{remark}, and for polar curves this is the only correct choice, since $\pi_\eta: X\to\P^1$ has two preimages over zero. The remaining vertices $W_i$ are the images of the Weierstrass points at the intersections of diagonals of the rhombi, therefore they are the $N$-th roots of unity.

Now that we have determined the vertices of the triangulation on the sphere, it remains to determine its edges. For this we need to take a further quotient and follow what happens with the triangulation in flat metric.

The flat metric on $(X,\omega_k)$ descends under $\pi_\eta$ to the flat metric on the sphere given by replacing each rhombus $OA_iB_iC_i$ with a triangle $OA_iC_i$ and folding each side $A_iC_i$ in half (see Figure~\ref{fig:sphere_triang2}). We omit some identifications on the picture: each $W_iA_i$ is identified with $W_iA_{i+1}$ by $\pi$-rotation around $W_i$. It follows that the the resulting flat metric is given by a meromorphic quadratic differential $q_k$ on $\P^1$.

Furthermore, the quotient of $(\P^1, q_k)$ under the rotation map $\rho$ is an orbifold with a flat metric obtained by identifying sides of two identical right angle triangles (see Figure~\ref{fig:biliard_triang2}). The angles can be calculated from the rhombus angle (see Table~\ref{table2}), they are given in (\ref{angles1}), (\ref{angles2}) and (\ref{angles3}). In  complex coordinate the vertices $O, W$ and $A$ of the triangles are $0, 1$ and $\infty$ respectively.

\begin{figure}[H]
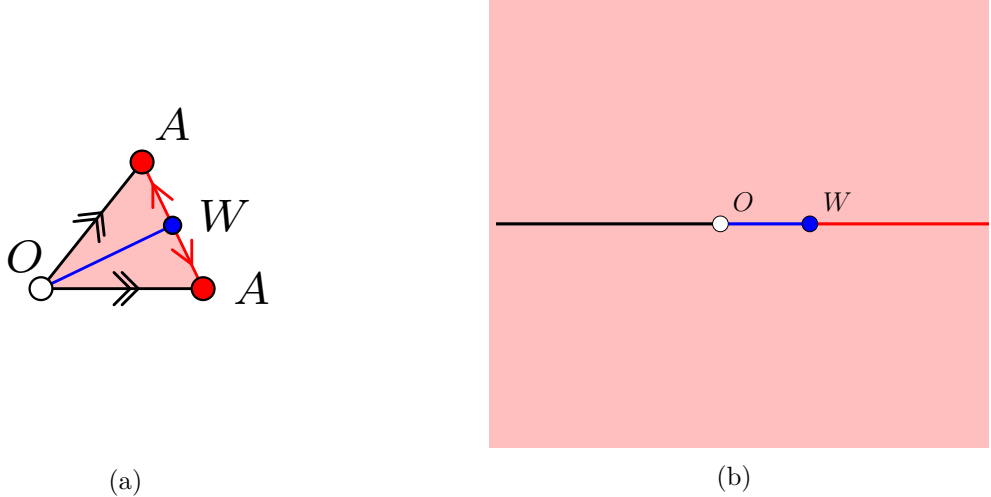

\begin{subfigure}{.48\textwidth}
  \centering
  \includestandalone[width=0.8\linewidth]{S71_v3}
  \caption{}
  \label{fig:biliard_triang2}
\end{subfigure}
\begin{subfigure}{.48\textwidth}
  \centering
  \includestandalone[width=0.85\linewidth]{biliard_triang}
  \caption{}
  \label{fig:biliard_triang1}
\end{subfigure}%
\caption{The image of the triangulation $\mathcal{T}$ under the hyperelliptic projection on $\P^1$ (a)  in flat coordinate, (b) in complex coordinate.}
\label{fig:biliard_triang}
\end{figure}

Now note that there is an anti-holomorphic map given by reflection in $OW$ line which swaps two triangles and fixes the vertices and the edges the triangulation. Such anti-holomorphic map on $\P^1$ is unique up to automorphism, and is given by $z \mapsto \overline z$. Therefore the edges of the triangulation are real line intervals: $AO = (-\infty, 0), OW = (0,1)$ and $WA = (1,\infty)$ (see Figure~\ref{fig:biliard_triang1}). Lifting these edges via the rotation quotient covering map one obtains the triangulation of the sphere as on Figure~\ref{fig:sphere_triang1}. This proves the first part of the proposition, and the second part is its immediate corollary.

Finally, for the third part it remains to notice that the unfolding procedure for the discussed right angle triangle follows the pattern of the composed covers from above. For background on unfolding see \cite{Mtab}.
\end{proof}

\bibliographystyle{amsplain} 
	\bibliography{biblio.bib}

\Addresses
\end{document}